\newlength{\abovebis} 
\newlength{\belowbis} 
\newlength{\aboveshortbis} 
\newlength{\belowshortbis} 
\everydisplay\expandafter{%
  \the\everydisplay 
  \advance\abovedisplayskip\abovebis 
  \advance\belowdisplayskip\belowbis 
  \advance\abovedisplayshortskip\aboveshortbis 
  \advance\belowdisplayshortskip\belowshortbis 
} 
\def\R{\mathbb{R}}
\def\C{\mathbb{C}}
\def\H{\mathbb{H}}
\def\B{\mathbb{B}}
\def\Div{\mathrm{div}}
\def\Sc{\textrm{Sc}}
\def\curl{\textrm{curl}}
\theoremstyle{plain}
\newtheorem{lemma}{Lemma}[section]
\newtheorem{proposition}[lemma]{Proposition}
\theoremstyle{definition}
\newtheorem{remark}{Remark}
\numberwithin{equation}{section}
\author{Matteo Santacesaria}
\address{Department of Mathematics, University of  Genoa, 16146, Italy.}
\email{matteo.santacesaria@unige.fi}
\begin{document}
\title[Calderon's inverse problem in 3D]{Note on Calder\'on's inverse problem \\for measurable conductivities}

\begin{abstract}
The unique determination of a measurable conductivity from the Dirichlet-to-Neumann map of the equation $\Div (\sigma \nabla u) = 0$ is the subject of this note. A new strategy, based on Clifford algebras and a higher dimensional analogue of the Beltrami equation, is here proposed. This represents a possible first step for a proof of uniqueness for the Calder\'on problem in three and higher dimensions in the $L^\infty$ case.
\end{abstract}
\keywords{Calderon problem, electrical impedance tomography, Clifford analysis, complex geometrical optics solutions, quaternionic Beltrami equation}
\subjclass[2010]{35R30, 15A66, 35J56}

\maketitle

\section{Introduction}
Let $\sigma \in L^{\infty} (\Omega)$ be an isotropic electrical conductivity, where $\Omega \subset \R^n$, $n \geq 3$ is a bounded domain with connected complement  and $\sigma (x) \geq \sigma_0 > 0$ a.e. in $\Omega$. For every $f \in H^{1/2}(\partial \Omega)$ there exists a unique solution $u \in H^1(\Omega)$ of the Dirichlet problem for the conductivity equation
\begin{equation} \label{diri}
\Div (\sigma \nabla u) =0 \quad \text{in } \Omega, \quad u|_{\partial \Omega} = f.
\end{equation}
Then it is possible to define the Dirichlet-to-Neumann map $\Lambda_{\sigma} : H^{1/2}(\partial \Omega) \to H^{-1/2}(\partial \Omega)$ as follows:
\begin{equation}\notag
\Lambda_{\sigma}f = \left. \sigma\frac{\partial u}{\partial \nu}\right|_{\partial \Omega},
\end{equation}
where $\nu$ is the unit outer normal vector to $\partial \Omega$, $f \in H^{1/2}(\partial \Omega)$ and $u$ the unique $H^1(\Omega)$ solution of the Dirichlet problem \eqref{diri}. The derivative $\sigma \partial u/ \partial \nu$ is defined by
\begin{equation}\notag
\langle \sigma\frac{\partial u}{\partial \nu}, \psi \rangle_{H^{-1/2}(\partial \Omega),H^{1/2}(\partial \Omega)} = \int_{\Omega} \sigma \nabla u \cdot \nabla \psi \,dx,
\end{equation}
where $\psi \in H^1(\Omega)$ and $dx$ is the Lebesgue measure.

In 1980, Calder\'on proposed the following inverse problem \cite{Calderon1980}.

{\bf Calderon's problem.} Given $\Lambda_{\sigma}$, find $\sigma$ in $\Omega$.

This inverse problem have triggered an impressive amount of pure and applied research in the last decades. Global uniqueness, meaning the injectivity of the map $\sigma \mapsto \Lambda_\sigma$, under some smoothness assumptions on $\sigma$, has been first shown in \cite{Sylvester1987} in three and higher dimensions, and in \cite{Nachman1996} in two dimensions. The latter result have been greatly improved in \cite{Astala2006a}, where uniqueness was obtained for measurable conductivities in two dimensions (later generalized in \cite{Astala2012}). In higher dimensions this problem is still open. The best results so far show that the lowest regularity required to guarantee uniqueness is Sobolev $W^{1,n}$ in dimension $n = 3,4$ \cite{Haberman2015} and Lipschitz in higher dimensions \cite{Caro2016}. 

It is unclear if global uniqueness in three and higher dimensions for measurable conductivites holds true. No counterexamples have been found but it has been conjectured \cite{Brown2003} that the lowest regularity possible is $W^{1,n}$, in dimension $n \geq 3$ (because of related results on unique continuation). It is rather clear, though, that the techniques used until now have reached some sort of limit and a new framework must be introduced in order to tackle the problem.

The present note suggests a new strategy to study this problem. The main idea is to extend the two-dimensional approach of Astala-P\"aiv\"arinta \cite{Astala2006a} to higher dimensions. It seemed that the most natural framework to do so is via Clifford algebras, which in the three dimensional case is the algebra of quaternions. The main result obtained in this note is to rewrite the conductivity equation, in the three dimensional case, as a higher dimensional analogue of the Beltrami equation, also known as Clifford-Betrami equation:
\begin{equation*} 
D F  = \mu D \bar F,
\end{equation*}
where $F$ is a Clifford algebra valued function and $D$ is a so-called Cauchy-Riemann operator. Incidentally, the Beltrami coefficient $\mu$ coincides with the one from \cite{Astala2006a}.

The next natural step is to construct so-called complex geometrical optics (CGO) solutions (also known as exponentially growing or Faddeev-type solutions \cite{Faddeev1966}) for this equation and study their properties. In this way one could obtain either a higher dimensional analogue of the $\bar \partial$ equation in some parameter space, a linear (or nonlinear) transform of $\mu$ from high frequency asymptotics, or other indirect information on the unknown conductivity. Here we only propose a possible definition of CGO solutions, leaving their construction and analysis to future work.

The proposed CGO solutions are characterized by an asymptotic behaviour defined by a new family of exponential functions inspired by \cite{li1994}. These exponential functions are not only harmonic, but also monogenic, i.e. they belong to the kernel of $D$. To show the usefulness of these functions, a new proof of uniqueness for the linearized Calder\'on problem at a constant conductivity is given.

Note that quaternionic analytic techniques have been used in connection with the inverse conductivity problem also in the works \cite{belishev2005,belishev2017,belishev2017a,delgado2017,delgado2018}.

The structure of this note is the following. In Section~\ref{sec:prel} we present the main ideas. Some basic notations of Clifford analysis are introduced, as well as the reduction of the conductivity equation to a Clifford-Beltrami equation and the new uniqueness proof for the linearized problem. We then propose a possible definition of CGO solutions in Section~\ref{sec:CGO}.

\section{The Clifford-Beltrami equation}\label{sec:prel}

Following the same argument as in \cite[Section 2]{Astala2006a}, it is possible to reduce the problem to the case where $\Omega$ is a smooth domain, for instance the unit ball in $\R^n$. Let us briefly review it.

The map $\Lambda_{\sigma}$ can be defined on general domains by identifying $H^{1/2}(\partial \Omega) = H^1(\Omega)/H^1_0(\Omega)$ and $H^{-1/2}(\partial \Omega) = H^{1/2}(\partial \Omega)^*$. The Dirichlet condition in \eqref{diri} is defined in the Sobolev sense, requiring $u-f \in H^1_0(\Omega)$ for $f \in H^{1/2}(\partial \Omega)$.

Now let $\B \subset \R^n$ be the unit ball, $\Omega \subset \B$ a simply connected domain and $\sigma_1, \sigma_2$ two $L^{\infty}$ conductivities defined in $\Omega$ such that $\Lambda_{\sigma_1}  = \Lambda_{\sigma_2}$. Extend $\sigma_1$ and $\sigma_2$ as the constant 1 outside $\Omega$ and let $\tilde \sigma_1, \tilde \sigma_2$ be the new conductivities on $\B$. For $f \in H^{1/2}(\partial \B)$ let $\tilde u_1 \in H^1(\B)$ the solution of $\Div (\tilde \sigma_1 \nabla \tilde u_1) = 0$ in $\B$ and $\tilde u_1|_{\partial \B} = f$. Now let $u_2 \in H^1(\Omega)$ be the solution to
\[
\Div (\sigma_2 \nabla u_2) = 0 \quad \text{in } \Omega, \qquad \tilde u_1 - u_2 \in H^1_0(\Omega),
\] 
and define $\tilde u_2 = u_2 \chi_\Omega + \tilde u_1 \chi_{\B \setminus \Omega} \in H^1(\B)$, because zero extensions of functions in $H^1_0(\Omega)$ belong to the $H^1$ class. Since $\Lambda_{\sigma_1} = \Lambda_{\sigma_2}$, we have that $\tilde u_2$ satisfies
\[
\Div (\tilde \sigma_2 \nabla \tilde u_2) = 0 \quad \text{in } \B.
\]
Note that in $\B \setminus \Omega$ we have $\tilde u_1 = \tilde u_2$ and $\tilde \sigma_1 = \tilde \sigma_2$. This immediately yields $\Lambda_{\tilde \sigma_1} f = \Lambda_{\tilde \sigma_2} f$, for every $f \in H^{1/2}(\partial \B)$. Thus, if uniqueness hold in $\B$, one obtain $\tilde \sigma_1 = \tilde \sigma_2$, and so $\sigma_1 = \sigma_2$.\smallskip

From now on we assume that $\Omega = \B$ and extend $\sigma \equiv 1$ outside $\Omega$.

We will rewrite the conductivity equation using some notation from differential geometry. Let $d$ be the exterior derivative on differential forms and $\star$ the Hodge star operator. Then, the conductivity equation \eqref{diri} can be written as

\begin{equation} \label{condif}
d \star (\sigma d u) = 0.
\end{equation}

We will now extend the notion of $\sigma$-harmonic conjugate, as considered in \cite{Astala2006a} on the plane, to higher dimensions. Similar ideas have been already explored in earlier works \cite{Alessandrini1994,Bers1954,Vekua1962}, 

\begin{lemma}
Let $\Omega \subset \R^n$, $n \geq 3$ be the unit ball, $\sigma \in L^{\infty}(\Omega)$ bounded from below and $u \in H^1(\Omega)$ a solution of the conductivity equation \eqref{condif}. Then there exists a $n-2$ form $\omega$, unique up to $d \phi$, for a $n-3$ form $\phi$, such that
\begin{align} \label{rel}
&d \omega = \star \sigma d u,\\
&d \star \left(\frac{1}{\sigma} d \omega\right) = 0.
\end{align}
\end{lemma}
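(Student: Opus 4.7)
The plan is to read the conductivity equation \eqref{condif} as saying that the $(n-1)$-form $\alpha := \star \sigma du$ is $d$-closed in the distributional sense, and then apply a Poincaré-type lemma on the ball to produce the desired $(n-2)$-form primitive $\omega$. The second identity will come almost for free from $d\omega = \star \sigma du$ by applying $\star$ and using $d^2 = 0$.

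First I would check the regularity and closedness: since $u \in H^1(\Omega)$ and $\sigma \in L^\infty$, we have $\sigma du \in L^2(\Omega, \Lambda^1)$ and hence $\alpha := \star \sigma du \in L^2(\Omega, \Lambda^{n-1})$. Equation \eqref{condif} is exactly $d\alpha = 0$ in $\mathcal{D}'(\Omega)$. Next I invoke the $L^2$-Poincaré lemma on the ball: because $\B$ is contractible, the homotopy (cone) operator $K$ provides a bounded right inverse $K : L^2\Lambda^{n-1}(\B) \to L^2\Lambda^{n-2}(\B)$ satisfying $dK\alpha = \alpha$ for every $d$-closed $L^2$ form $\alpha$. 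Setting $\omega := K\alpha$ gives $\omega \in L^2(\B,\Lambda^{n-2})$ with $d\omega = \star \sigma du$ in the distributional sense, which is \eqref{rel}. (If desired, the Coulomb gauge $d^\ast \omega = 0$ together with elliptic regularity for the Hodge Laplacian upgrades $\omega$ to $H^1$; what matters here is only that $d\omega$ makes sense as an $L^2$ form.)

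For uniqueness up to an exact term, suppose $\omega_1,\omega_2$ both satisfy \eqref{rel}. Then $d(\omega_1-\omega_2)=0$, so $\omega_1-\omega_2$ is a closed $(n-2)$-form on the contractible domain $\B$; by the same Poincaré lemma applied one degree lower, there exists an $(n-3)$-form $\phi$ with $\omega_1-\omega_2 = d\phi$ (for $n=3$ this reduces to the usual statement that a closed $1$-form on the ball is the differential of a scalar function, recovering the classical Astala-P\"aiv\"arinta situation).

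Finally, to derive the second equation, multiply $d\omega = \star \sigma du$ pointwise a.e.\ by $1/\sigma \in L^\infty$ to obtain $\tfrac{1}{\sigma} d\omega = \star du$. Applying $\star$ and using $\star\star = (-1)^{k(n-k)}\mathrm{Id}$ on $k$-forms in Euclidean signature (with $k=1$, giving the sign $(-1)^{n-1}$), we get $\star\bigl(\tfrac{1}{\sigma} d\omega\bigr) = (-1)^{n-1}\,du$. Taking $d$ of both sides and using $d(du)=0$ yields $d \star\bigl(\tfrac{1}{\sigma} d\omega\bigr) = 0$, which is the remaining identity. The only genuine technical point of the argument is the choice of function spaces in step one: one must pick a version of the Poincaré homotopy that is bounded on $L^2$ (or on a slightly higher Sobolev scale) so that both $d\omega$ and $\tfrac{1}{\sigma}d\omega$ are legitimate $L^2$ objects, and so that the second identity holds in $\mathcal{D}'(\B)$; everything else is formal.
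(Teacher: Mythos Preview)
Your proof is correct and follows exactly the route the paper indicates---Poincar\'e lemma for the existence and uniqueness of $\omega$, and the algebraic properties of $\star$ together with $d^2=0$ for the second identity---only spelled out in far more detail (the paper's proof is the single sentence ``The proof follows from Poincar\'e lemma and the properties of the Hodge star operator''). Your additional care about the $L^2$ version of the homotopy operator and the distributional sense of the identities is appropriate for the low-regularity setting but goes beyond what the paper provides.
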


\begin{proof}
The proof follows from Poincar\'e lemma and the properties of the Hodge star operator.
\end{proof}

The form $\omega$ will be called $\sigma$-harmonic conjugate of $u$ and it is (locally) given by $\frac{n(n-1)}{2}$ functions.

We will now restrict ourselves to the case $n=3$. In this case there exists three functions $u_1,u_2,u_3$ such that identity \eqref{rel} can be written as

\begin{equation} \label{rel2}
\sigma \nabla u = \curl (u_1,u_2,u_3).
\end{equation}
The triplet $(u_1,u_2,u_3)$ is defined up to $\nabla \phi$ for some function $\phi$, which will be precised later.

The system \eqref{rel2} is a 3D analogue of the one obtained in \cite{Astala2006a} on the plane. In that case this was equivalent to a Beltrami equation and thanks to the Ahlfors-Vekua theory of quasiconformal maps it was possible to construct CGO solutions for $L^{\infty}$ conductivities.

It does not seem clear how to construct CGO solutions directly for \eqref{rel2}. We will instead use the framework of Clifford analysis and Dirac operators~\cite{Brackx1982} to write the system in a more convenient form.\smallskip

We consider $\R_{(2)}$, the real universal Clifford algebra over $\R^2$. It is generated as an algebra over $\R$ by the elements $\{e_0,e_1,e_2\}$, where $e_1, e_2$ is a basis of $\R^2$ with $e_i e_j + e_j e_i = -2 \delta_{ij}$, for $i,j=1,2$, and $e_0 = 1$ is the identity and commutes with the basis elements. The dimension of $\R_{(2)}$ is $4$ and it can be identified with $\H$, the algebra of quaternions. We denote $e_3 = e_1 e_2$ for the sake of simplicity. An element of $\R_{(2)}$ can be written as
\begin{equation}
A = A_0 e_0 + A_1 e_1+A_2 e_2+A_3 e_3,
\end{equation}
where $A_j$, $j =0,\ldots,3$ are real. We define the conjugate $\bar A$ of an element $A$ as
\begin{equation}\label{def:conj}
\bar A = A_0 e_0 - A_1 e_1-A_2 e_2-A_3 e_3.
\end{equation}
For $A,B \in \R_{(2)}$ we write $AB$ for the resulting Clifford product. The product $\bar A B$ defines a Clifford valued inner product on $\R_{(2)}$. We have $\overline{AB} = \bar B \bar A$ and $\bar{\bar A} = A$. For $A \in \R_{(2)}$, $\Sc(A)$ denotes the scalar part of $A$, that is the coefficient of the element $e_0$. The scalar part of a Clifford inner product, $\Sc(\bar A B)$, is the usual inner product in $\R^4$ when $A$ and $B$ are identified as vectors. We will write it $\langle A, B \rangle$.

With this inner product the space $\R_{(2)}$ is an Hilbert space and the resulting norm is the usual Euclidean norm $\|A\|=(\sum_j A_j^2)^{1/2}$. A Clifford valued function $f : \R^3 \to \R_{(2)}$ can be written as $f = f_0 e_0 + f_1 e_1 + f_2 e_2+f_3 e_3$, where $f_j$ are real valued.

The Banach spaces $C^\alpha$, $L^p$, $W^{1,p}$ of $\R_{(2)}$-valued functions are defined by requiring that each component $f_j$ belong to such spaces. On $L^2(\Omega)$ we introduce the $\R_{(2)}$-valued inner product
\begin{equation}\notag
(f,g) = \int_{\Omega} \bar f(x) g(x) dx.
\end{equation}
We define the following Cauchy-Riemann operators, with $(x_0,x_1,x_2)$ coordinates of $\R^3$,
\begin{equation}\nonumber
D = \frac{\partial}{\partial x_0}+ e_1 \frac{\partial}{\partial x_1}+ e_2 \frac{\partial}{\partial x_2}
\end{equation}
and
\begin{equation}\nonumber
\bar D = \frac{\partial}{\partial x_0}- e_1 \frac{\partial}{\partial x_1}-  e_2 \frac{\partial}{\partial x_2}.
\end{equation}
The operator $\partial =  e_1 \frac{\partial}{\partial x_1}+ e_2 \frac{\partial}{\partial x_2}$ is called the Dirac operator. On a Clifford valued function $f = \sum_{k=0}^3 f_k e_k$, the operators $D$ and $\bar D$ can act from left and right:
\begin{equation}\notag
D^{(l)}f = \sum_{j=0}^2 \sum_{k=0}^3  \frac{\partial f_k}{\partial x_0}e_j e_k, \qquad D^{(r)}f = \sum_{j=0}^2 \sum_{k=0}^3  \frac{\partial f_k}{\partial x_0}e_k e_j,
\end{equation}
and the same for $\bar D^{(l)}, \bar D^{(r)}f$. A function $f$ is said to be \textit{left (right) monogenic} if $D^{(l)}f = 0$ ($D^{(r)}f = 0$). From now on we will denote $D^{(l)}f$ simply by $Df$.
We have that $D \bar D = \bar D D = \Delta$ where $\Delta$ is the Dirac Laplacian. 

Using these operators we can write the system \eqref{rel2} in a compact form. The Clifford valued function $F$ defined as
\begin{equation}\notag
F = u e_0 + u_2 e_1 - u_1 e_2 - u_0 e_3,
\end{equation}
satisfies the following Clifford--Beltrami equation
\begin{equation} \label{bel}
D F  = \mu  D \bar F,
\end{equation}
where $\mu = (1-\sigma)/(1+\sigma)$, provided
\begin{equation*}
\Div (u_0,u_1,u_2) =0.
\end{equation*}
This last condition can always be achieved since we can add to $(u_0,u_1,u_2)$ the gradient of a function $\phi$ such that $\Delta \phi = -\Div (u_0,u_1,u_2)$. In other words, equation \eqref{bel} is equivalent to the system
\begin{equation}\label{eq:sys}
\left\{ \begin{array}{l}
\curl (u_0,u_1,u_2)=\sigma \nabla u,\\
\Div (u_0,u_1,u_2) =0.
\end{array}\right.
\end{equation}
More precisely, the following identities hold:
\begin{align}\label{eq:id1}
\frac{D \bar F + D F}{2} &= \frac{\partial u}{\partial x_0}e_0+ \frac{\partial u}{\partial x_1}e_1 + \frac{\partial u}{\partial x_2}e_2,\\ \label{eq:id2}
\frac{D \bar F - D F}{2} &= \curl_0 e_0 + \curl_1 e_1 + \curl_2 e_2 + \Div (u_0,u_1,u_2) e_3,
\end{align}
where we have denoted $(\curl_0,\curl_1,\curl_2) = \curl(u_0,u_1,u_2)$.

A generalization of Alessandrini's identity can be now readily proven.

\begin{proposition}
Let $\sigma_1,\sigma_2 \in L^\infty(\Omega)$ be two conductivities with $\sigma_1(x), \sigma_2 (x) \geq \sigma_0 >0$ a.e. in $\Omega$, and $\Lambda_1, \Lambda_2$ the associated Dirichlet-to-Neumann map, respectively. Then, for every $f_1,f_2 \in H^{1/2}(\partial \Omega)$ we have the identity
\begin{equation}\notag
\langle f_1,(\Lambda_2-\Lambda_1)f_2 \rangle_{H^{1/2}(\partial \Omega),H^{-1/2}(\partial \Omega)}=\frac 1 2  \int_\Omega (\mu_1-\mu_2)\langle D\bar{F_1} , D \bar{F_2}\rangle dx,
\end{equation}
where $u_j = \mathrm{Sc}(F_j)$ solves
\begin{equation}\notag
\mathrm{div} (\sigma_j \nabla u_j) = 0, \; \text{in } \Omega, \quad u_j = f_j, \; \text{on } \partial \Omega,
\end{equation}
and $F_j$ satisfy $D F_j = \mu_j D \bar F_j$ in $\Omega$, with $\mu_j = (1-\sigma_j)/(1+\sigma_j)$, $j = 1,2$.
\end{proposition}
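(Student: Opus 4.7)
The plan is to reduce the stated identity to the classical (scalar) Alessandrini identity and then translate the integrand into Clifford language using formulas \eqref{eq:id1}--\eqref{eq:id2}. First I would recall that the standard Alessandrini identity reads
\begin{equation*}
\langle f_1, (\Lambda_2 - \Lambda_1) f_2 \rangle_{H^{1/2}(\partial\Omega),H^{-1/2}(\partial\Omega)} = \int_\Omega (\sigma_2 - \sigma_1)\,\nabla u_1 \cdot \nabla u_2\,dx,
\end{equation*}
which follows from two integrations by parts using $u_j|_{\partial\Omega} = f_j$, the symmetry of the bilinear form $\int \sigma\, \nabla u\cdot \nabla v$, and the definition of $\Lambda_j$ as the weak conormal derivative. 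So the only real content is to rewrite the integrand as $\tfrac{1}{2}(\mu_1-\mu_2)\langle D\bar F_1, D\bar F_2\rangle$.

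To do this, I would denote by $\mathbf{w}^{(j)}=(w_0^{(j)},w_1^{(j)},w_2^{(j)})$ the divergence-free triplet used to build $F_j$, so that \eqref{eq:sys} gives $\curl \mathbf{w}^{(j)} = \sigma_j \nabla u_j$ and $\Div \mathbf{w}^{(j)} = 0$. Adding identities \eqref{eq:id1} and \eqref{eq:id2} produces an explicit expansion of $D\bar F_j$ in terms of $\nabla u_j$, $\curl \mathbf{w}^{(j)}$, and $\Div \mathbf{w}^{(j)}$. After substituting the two structural relations above, the $e_3$-component of $D\bar F_j$ vanishes and the remaining components collapse to
\begin{equation*}
D\bar F_j = (1+\sigma_j)\bigl(\partial_{x_0} u_j\, e_0 + \partial_{x_1} u_j\, e_1 + \partial_{x_2} u_j\, e_2\bigr).
\end{equation*}

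Since $\langle A, B\rangle = \Sc(\bar A B)$ coincides with the Euclidean inner product on $\R_{(2)}\simeq\R^4$, this identification immediately yields
\begin{equation*}
\langle D\bar F_1, D\bar F_2\rangle = (1+\sigma_1)(1+\sigma_2)\,\nabla u_1\cdot\nabla u_2.
\end{equation*}
From $\mu_j = (1-\sigma_j)/(1+\sigma_j)$, a one-line simplification gives the algebraic identity $\tfrac{1}{2}(\mu_1-\mu_2)(1+\sigma_1)(1+\sigma_2) = \sigma_2 - \sigma_1$, which converts the classical Alessandrini integrand into the desired Clifford integrand. There is no serious analytic obstacle here: the proof is essentially a dictionary entry between the two formalisms, and the only thing one has to check carefully is that the divergence-free gauge fixing of $\mathbf{w}^{(j)}$ (which is what makes the $e_3$-component drop out) is compatible on both sides of the Beltrami substitution $DF_j = \mu_j D\bar F_j$.
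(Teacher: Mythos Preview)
Your proof is correct and follows the same overall strategy as the paper: start from the classical Alessandrini identity and use \eqref{eq:id1}--\eqref{eq:id2} to translate the integrand into Clifford form. The only real difference is in the algebraic bookkeeping. The paper first rewrites $(\sigma_2-\sigma_1)\nabla u_1\cdot\nabla u_2$ as $\curl(U_2)\cdot\nabla u_1-\curl(U_1)\cdot\nabla u_2$, expresses each factor via $(D\bar F_j\pm DF_j)/2$, and then invokes the Beltrami relation $DF_j=\mu_j D\bar F_j$ to collapse everything into $\tfrac{1}{2}(\mu_1-\mu_2)\langle D\bar F_1,D\bar F_2\rangle$. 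You instead add \eqref{eq:id1} and \eqref{eq:id2} and substitute the system \eqref{eq:sys} directly to obtain the clean formula $D\bar F_j=(1+\sigma_j)\nabla u_j$ (as a pure $e_0,e_1,e_2$ element), after which the identity $\tfrac{1}{2}(\mu_1-\mu_2)(1+\sigma_1)(1+\sigma_2)=\sigma_2-\sigma_1$ finishes the job. Your route is slightly more direct and makes the explicit form of $D\bar F_j$ visible, while the paper's route stays entirely in the $\mu$-variables and highlights the role of the Beltrami equation; both are equivalent and equally rigorous.
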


\begin{proof}
By Green's formulas one readily obtain the classical Alessandrini's identity for the Calder\'on problem \cite{Alessandrini1988}:
\begin{equation*}
\langle f_1,(\Lambda_2-\Lambda_1)f_2 \rangle_{H^{1/2},H^{-1/2}}= \int_\Omega (\sigma_2-\sigma_1)\nabla u_1 \cdot \nabla u_2\, dx.
\end{equation*}
Let now $U_j$ the vector field such that $\curl( U_j) = \sigma_j \nabla u_j$. Then
\begin{equation*}
\langle f_1,(\Lambda_2-\Lambda_1)f_2 \rangle_{H^{1/2},H^{-1/2}}= \int_\Omega \curl (U_2) \cdot \nabla u_1 - \curl(U_1)\cdot \nabla u_2\, dx.
\end{equation*}
Using identities \eqref{eq:id1}, \eqref{eq:id2}, and the definition of the scalar product $\langle \cdot , \cdot \rangle$ in $\R_{(2)}$, we can write the quantity under the integral sign as the scalar part of a Clifford product as follows:
\begin{align*}
&\frac 1 4 \left( \langle D \bar F_2 - D F_2, D \bar F_1 +D F_1 \rangle -\langle D \bar F_2 + D F_2, D \bar F_1 -D F_1 \rangle\right)\\
& \qquad = \frac 1 4 \left((1-\mu_2) ( 1+\mu_1) - (1+\mu_2) (1-\mu_1)\right) \langle D \bar F_2, D \bar F_1\rangle\\
& \qquad = \frac{\mu_1 - \mu_2}{2} \langle D \bar F_2, D \bar F_1\rangle
\end{align*}
thanks to the Clifford-Beltrami equation satisfied by $F_1, F_2$.
\end{proof}

We now consider the complex Clifford algebra $\C_{(2)}$, generated over $\C$ with the same basis elements of $\R_{(2)}$. Note that the Clifford conjugation is always defined as in \eqref{def:conj}, so that it does not extend to the complex conjugation on the coefficients (which is never used in this note). Following \cite{li1994}, we define the following exponential function with values in $\C_{(2)}$:
\begin{align}\label{def:E1}
E_1(x,\zeta) = \sum_{k=0}^\infty \frac{1}{k!}\left( i (x_1\zeta_1 + x_2 \zeta_2-x_0(\zeta_1 e_1 +\zeta_2 e_2))\right)^k,
\end{align}
for $x = (x_0,x_1,x_2) \in \R^3$, $\zeta = (\zeta_1,\zeta_2) \in \C^2$ and $i$ is the imaginary unit. It is a holomorphic function of $\zeta \in \C^2$ for each $x \in \R^3$ and satisfies
\begin{align*}
\frac{\partial}{\partial x_0}E_1(x,\zeta) = -i(\zeta_1 e_1+\zeta_2 e_2) E_1(x,\zeta) &= -\left(e_1 \frac{\partial}{\partial x_1}+e_2 \frac{\partial}{\partial x_2}\right) E_1(x,\zeta)\\
&= -\left( \frac{\partial}{\partial x_1}E_1 e_1+ \frac{\partial}{\partial x_2} E_1 e_2\right).
\end{align*}
This yields $D^{(l)} E_1 = D^{(r)} E_1= 0$, that is $E_1$ is left and right monogenic. Moreover we have that $E_1(x,\zeta)E_1(y,\zeta) = E_1(x+y,\zeta)$, $E_1(x,-\zeta) = E_1(-x,\zeta)$, and 
\begin{align*}
&E_1(x,\zeta) = e^{i (x_1\zeta_1 + x_2 \zeta_2-x_0(\zeta_1 e_1 +\zeta_2 e_2))} = e^{i (x_1\zeta_1 + x_2 \zeta_2)} e^{-ix_0(\zeta_1 e_1 +\zeta_2 e_2)}.
\end{align*}
Note that we also have
\begin{align*}
&E_1(x,\zeta) = e^{i (x_1\zeta_1 + x_2 \zeta_2)}\left(\cosh(x_0 |\zeta|_{\C}) - \frac{i(\zeta_1 e_1 +\zeta_2 e_2)}{|\zeta|_{\C}}\sinh(x_0 |\zeta|_{\C}) \right).
\end{align*}
Here we have denoted $|\zeta|_{\C}$ a square root of $|\zeta|_{\C}^2$, the holomorphic extension of the Euclidean norm $\|\xi\|^2$, for $\xi \in \R^2$, defined as
\begin{equation}\notag
|\zeta|_{\C}^2 = \zeta_1^2 + \zeta_2^2 = \|\xi\|^2 - \|\eta\|^2 +2 i \xi \cdot \eta,
\end{equation}
for $\zeta = \xi + i \eta \in \C^2$ (where $\xi, \eta \in \R^2$). See \cite[\S 2]{li1994} for more details.

We also introduce $E_2(x,\zeta) = \frac{1}{2|\zeta|_{\C}}\bar D E_1(x,\zeta) = -\frac{i(\zeta_1 e_1 +\zeta_2 e_2)}{|\zeta|_{\C}}E_1(x,\zeta)$, for $|\zeta|_{\C}\neq 0$, which is left and right monogenic and can be written as
\begin{align*} \label{def:E2}
&E_2(x,\zeta) = e^{i (x_1\zeta_1 + x_2 \zeta_2)}\left(\sinh(x_0 |\zeta|_{\C}) - \frac{i(\zeta_1 e_1 +\zeta_2 e_2)}{|\zeta|_{\C}}\cosh(x_0 |\zeta|_{\C}) \right).
\end{align*}
Now consider the following combination of the two:
\begin{equation}\notag
E(x,\zeta) = E_1(x,\zeta)-E_2(x,\zeta) = \left(1+\frac{i(\zeta_1 e_1 +\zeta_2 e_2)}{|\zeta|_{\C}}\right)E_1(x,\zeta).
\end{equation}
Using the identity $e^z = \cos(z)+i\sin(z)$, for $z \in \C$, one readily obtains:
\begin{equation}
E(x,\zeta) = e^{i(x_1\zeta_1 + x_2 \zeta_2) -x_0|\zeta|_{\C}}\left( 1+ \frac{i(\zeta_1 e_1 +\zeta_2 e_2)}{|\zeta|_{\C}}\right).
\end{equation}
This function is left and right monogenic and its scalar part coincides with the harmonic exponential of the classical CGO solutions, i.e. $e^{i x \cdot \zeta}$, for $\zeta \in \C^3$, $\zeta \cdot \zeta =0$.

Using the function $E$, it is possible to give a new proof of the uniqueness of the linearized Calder\'on problem at a constant conductivity. More precisely, we show the injectivity of the Fr\'echet derivative of the Dirichlet-to-Neumann map $d\Lambda |_{\sigma \equiv const.}$, a result originally obtained by Calder\'on \cite{Calderon1980}.

\begin{proposition}\label{prop:lin}
The Fr\'echet derivative $d \Lambda|_{\sigma \equiv const}$ at a constant conductivity is injective.
\end{proposition}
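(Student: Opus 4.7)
The plan is to reduce to $\sigma_0\equiv 1$ (via the scaling $\Lambda_{c\sigma}=c\Lambda_\sigma$) and then to mimic Calder\'on's 1980 argument, with the harmonic complex exponentials replaced by the scalar parts of the Clifford exponentials $E(x,\zeta)$. First I linearize the Alessandrini identity of the previous proposition at $\sigma_1=\sigma_2=1$: writing $\sigma_2=1+\varepsilon\eta$ gives $\mu_2=-\varepsilon\eta/2+O(\varepsilon^2)$ while $\mu_1=0$, and in the limit both $F_j$ are monogenic. By \eqref{eq:id1}, monogenicity yields $D\bar F = 2\bigl(\partial_{x_0}u\,e_0+\partial_{x_1}u\,e_1+\partial_{x_2}u\,e_2\bigr)$ with $u=\mathrm{Sc}(F)$, whence a short scalar-product computation gives $\langle D\bar F_1,D\bar F_2\rangle = 4\,\nabla u_1\cdot\nabla u_2$. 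Differentiating the Clifford Alessandrini identity at $\varepsilon=0$ therefore reproduces the classical formula
\[
\langle f_1, d\Lambda|_{\sigma\equiv 1}(\eta)f_2\rangle = \int_\Omega \eta\,\nabla u_1\cdot\nabla u_2\,dx, \qquad u_1,u_2\in H^1(\Omega)\text{ harmonic},
\]
and the hypothesis becomes $\int_\Omega \eta\,\langle D\bar F_1,D\bar F_2\rangle\,dx=0$ for all monogenic $F_1,F_2$.

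Next I test with $F_j := E(x,\zeta^{(j)})$, $\zeta^{(j)}\in\C^2$, $|\zeta^{(j)}|_{\C}\neq 0$. Each such $F_j$ is left monogenic, and its scalar part is $u_j = e^{ix\cdot\tilde\zeta^{(j)}}$ with $\tilde\zeta^{(j)}:=(i|\zeta^{(j)}|_{\C},\zeta^{(j)}_1,\zeta^{(j)}_2)\in\C^3$ satisfying $\tilde\zeta^{(j)}\cdot\tilde\zeta^{(j)}=0$, hence harmonic. Writing $E = e^{\alpha}p$ with $p = 1 + iv/|\zeta|_{\C}$, $v = \zeta_1 e_1 + \zeta_2 e_2$, and using $v^2=-|\zeta|_{\C}^2$, a direct calculation yields $D\bar E = -2|\zeta|_{\C}\,e^{\alpha}\,\bar p$, and so
\[
\langle D\bar E(x,\zeta^{(1)}), D\bar E(x,\zeta^{(2)})\rangle = 4\bigl(|\zeta^{(1)}|_{\C}|\zeta^{(2)}|_{\C}-\zeta^{(1)}\cdot\zeta^{(2)}\bigr)\,e^{ix\cdot(\tilde\zeta^{(1)}+\tilde\zeta^{(2)})}.
\]
Calder\'on's parameterization $\tilde\zeta^{(j)} = k/2\pm iv_\perp$ (with $v_\perp\in\R^3$, $v_\perp\cdot k=0$, $|v_\perp|=|k|/2$) realizes any prescribed $k\in\R^3\setminus\{0\}$ as $\tilde\zeta^{(1)}+\tilde\zeta^{(2)}$; expanding $|k|^2 = 2\,\tilde\zeta^{(1)}\cdot\tilde\zeta^{(2)} = 2(-|\zeta^{(1)}|_{\C}|\zeta^{(2)}|_{\C}+\zeta^{(1)}\cdot\zeta^{(2)})$ identifies the prefactor as $-2|k|^2$.

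Substituting into the linearized identity yields $|k|^2\widehat\eta(k) = 0$ for every $k\neq 0$, where $\widehat\eta$ is the Fourier transform of the zero extension of $\eta$. Since that extension is compactly supported and $L^\infty$, $\widehat\eta$ is entire of exponential type (Paley--Wiener), so it vanishes on $\R^3\setminus\{0\}$ and then identically, giving $\eta = 0$. The main obstacle is the Clifford-algebraic bookkeeping --- choosing a consistent branch of $|\zeta|_{\C}$, handling the noncommutativity in $p^{(1)}\bar p^{(2)}$, and matching the $\C^2$-parameter $\zeta$ to the $\C^3$ null vector $\tilde\zeta$. Nothing analytically new is needed beyond Calder\'on's original computation; the point of the proposition is rather to verify that the new monogenic CGO $E(x,\zeta)$ is already good enough to reproduce linearized uniqueness, as a sanity check for the nonlinear program based on the Clifford--Beltrami equation.
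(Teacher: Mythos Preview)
Your proof is correct and follows essentially the same route as the paper: reduce to the classical linearized identity $\int_\Omega \delta\,\nabla u_1\cdot\nabla u_2\,dx=0$, test with the harmonic scalar parts of $E(x,\zeta)$, and observe that the resulting prefactor $|\zeta^{(1)}|_\C|\zeta^{(2)}|_\C-\zeta^{(1)}\cdot\zeta^{(2)}=-\|k\|^2/2$ is nonvanishing. The only cosmetic differences are that you detour through the Clifford--Alessandrini identity and the computation of $D\bar E$ (which, as you note, just reproduces $4\nabla u_1\cdot\nabla u_2$), and that you parameterize via the standard $\C^3$ null vectors $k/2\pm iv_\perp$ whereas the paper gives an explicit algebraic choice of $a,b\in\C^2$ directly; neither difference is substantive.
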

\begin{proof}
Following the same argument as in \cite{Calderon1980}, the statement is equivalent to show that, given $\delta \in L^{\infty}(\Omega)$, if
\begin{equation}\label{eq:lindelta}
\int_\Omega \delta \nabla u_1 \cdot \nabla u_2 dx = 0,	
\end{equation}
for every $u_1,u_2$ such that $\Delta u_1 = \Delta u_2 =0$, then $\delta \equiv 0$. Let now $u_1, u_2$ be the scalar part of $E$. Note that
\begin{align*}
&\Sc(E(x,\zeta)) = e^{i(x_1\zeta_1 + x_2 \zeta_2) -x_0|\zeta|_{\C}}.
\end{align*}
Let $k = (k_1,k_2,k_3) \in \R^3$ and $a,b \in \C^2$ be such that $a+b = (k_1,k_2)$ and $|a|_{\C}+|b|_{\C} = i k_3$. These parameters can be constructed for instance as $a = (k_1,k_2) - b$, $b = (\lambda, i \lambda)$ where $\lambda = \|k\|^2 / ( 2 (k_1 + i k_2))$, for $(k_1,k_2) \neq (0,0)$ and $a = -b = (0,ik_3/2)$ for $(k_1,k_2)=(0,0)$. Note the the choice of the square root of $|a|_\C^2$ is determined by the condition $|a|_{\C} = i k_3$.

Then, plugging $\Sc(E(x,a)), \Sc(E(x,b))$, into \eqref{eq:lindelta}, we find
\begin{align*}
(|a|_{\C}|b|_{\C}-a\cdot b ) \int_{\Omega} \delta(x) e^{i x \cdot k} dx=-\frac{\|k\|^2}{2} \int_{\Omega} \delta(x) e^{i x \cdot k} dx = 0,
\end{align*}
for every $k \in \R^3$. Thus the Fourier transform of $\delta$ vanishes and so $\delta \equiv 0$. 
\end{proof}

\section{CGO solutions of the Clifford--Beltrami equation}\label{sec:CGO}

The main ingredient of essentially every global uniqueness proof for Cal\-der\'on's problem is a special family of solutions of a certain equation, often referred as complex geometrical optics (CGO) solutions, with prescribed asymptotic behavior. 

The purpose of this section is to propose a definition of CGO solutions for $L^\infty$ conductivities. Their existence and properties are not studied in this note, since new Clifford analytic tools seems to be required. This could represent the main hurdle in the understanding of Calder\'on's problem for discontinuous conductivities in three or higher dimension. \smallskip

In view of the previous section, it seems natural to consider CGO solutions for the Clifford-Beltrami equation \eqref{bel}. From the uniqueness for the linearized problem (Proposition \ref{prop:lin}) the asymptotic behavior of these solutions should be dictated by the exponential function $E$ introduced in Section~\ref{sec:prel}.

We first need to establish a Leibniz formula for the operator $D$. This was already obtained in \cite[Theorem 1.3.2]{Gurlebeck1990} for a slightly different Cauchy-Riemann operator.
\begin{lemma}[Leibniz's formula]\label{lem:leib}
Let $f = \sum_{k=0}^3 f_k e_k, g = \sum_{l=0}^3 g_l e_l$ be two Clifford valued functions. Then
\begin{equation}\label{eq:leib}
D(fg) = (Df)g - \bar f (\bar D g) +2 \Sc(fD)g,
\end{equation}
where $\Sc(fD) = f_0\frac{\partial}{\partial x_0} -\sum_{k=1}^2 f_k\frac{\partial}{\partial x_k}$.
\end{lemma}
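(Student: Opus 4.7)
The plan is to reduce the identity to a pointwise Clifford algebra commutation relation and then recognize the result. First I would apply the ordinary componentwise product rule to $fg$, writing
\begin{equation*}
D(fg) \;=\; \sum_{j=0}^{2} e_j\,\frac{\partial (fg)}{\partial x_j}\;=\;(Df)g + \sum_{j=0}^{2} e_j\, f\,\frac{\partial g}{\partial x_j},
\end{equation*}
where the coefficients of $g$ are kept on the right, since $D$ is the left operator. All the content of the lemma is now concentrated in rewriting $e_j f$ so that $f$ ends up on the left of $e_j$, which is nontrivial because the Clifford algebra is noncommutative.

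The key algebraic identity is the following. Since $\bar e_0 = e_0$ and $\bar e_k = -e_k$ for $k=1,2$, the involution gives $\overline{e_j f} = \bar f\,\bar e_j = \epsilon_j\,\bar f\,e_j$, with $\epsilon_0 = 1$ and $\epsilon_1 = \epsilon_2 = -1$. Combined with $e_j f + \overline{e_j f} = 2\,\Sc(e_j f)$, this yields
\begin{equation*}
e_j f \;=\; 2\,\Sc(e_j f)\;-\;\epsilon_j\,\bar f\,e_j, \qquad j=0,1,2.
\end{equation*}
A one-line verification (using $e_k^2 = -1$ for $k=1,2$ and $e_1 e_2 + e_2 e_1 = 0$) shows that $\Sc(e_0 f) = f_0$ and $\Sc(e_j f) = -f_j$ for $j=1,2$.

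Plugging this identity back and summing over $j$, the part involving $\bar f\,e_j$ recombines into
\begin{equation*}
-\sum_{j=0}^{2} \epsilon_j\,\bar f\,e_j\,\frac{\partial g}{\partial x_j} \;=\; -\bar f\left(\frac{\partial g}{\partial x_0} - e_1\frac{\partial g}{\partial x_1} - e_2\frac{\partial g}{\partial x_2}\right) \;=\; -\bar f\,(\bar D g),
\end{equation*}
while the scalar parts reassemble as
\begin{equation*}
2\sum_{j=0}^{2} \Sc(e_j f)\,\frac{\partial g}{\partial x_j}\;=\;2\left(f_0\frac{\partial}{\partial x_0} - f_1\frac{\partial}{\partial x_1} - f_2\frac{\partial}{\partial x_2}\right)g\;=\;2\,\Sc(fD)\,g,
\end{equation*}
which matches the definition of $\Sc(fD)$ given in the statement. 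Assembling the three pieces produces \eqref{eq:leib}.

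There is no genuine obstacle here: the whole proof is a direct calculation, and the only thing to watch is the sign pattern $\epsilon_j$ that distinguishes the $e_0$ direction from the spatial directions $e_1, e_2$. This is also precisely what forces the asymmetric sign convention in the definition of $\Sc(fD)$, and checking this consistency is the only point at which one can reasonably slip up.
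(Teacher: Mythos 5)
Your proof is correct and follows essentially the same route as the paper: both start from the componentwise product rule, peel off $(Df)g$, and then commute $e_j$ past $f$ in the remaining term $\sum_j e_j f\,\partial_j g$. The only difference is presentational: the paper carries out this commutation by expanding in components and regrouping via the relations $e_je_k+e_ke_j=-2\delta_{jk}$, whereas you encapsulate it in the single involution identity $e_jf = 2\,\Sc(e_jf)-\epsilon_j\,\bar f\,e_j$, which is a tidier way to organize the same calculation.
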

\begin{proof}
Let $\partial_j = \frac{\partial}{\partial x_j}$, $D = \sum_{j=0}^2 e_j \partial_j = \partial_0 + \partial$ and $\bar D = \partial_0 - \partial$. Denote $f = f_0 + f_I$ and $\bar f = f_0 - f_I$.  We have
\begin{align*}
D(fg) &= \sum_{j,k,l} \partial_j(f_k g_l) e_j e_k e_l = \sum_{j,k,l} (\partial_j f_k g_l + f_k \partial_j  g_l)  e_j e_k e_l\\
&= (Df)g + \sum_{j,k,l} f_k \partial_j  e_j e_k g_l e_l = (Df)g + \left(\sum_{j,k} f_k \partial_j  e_j e_k \right)g,
\end{align*}
where the sums are taken over all indices $j=0,1,2$ and $k,l = 0,1,2,3$. Now the last term can be rewritten as
\begin{align*}
&\left( \sum_{k=0}^3 f_k \partial_0 e_k \right)g + \left( \sum_{j=1}^2 f_0 \partial_j e_j \right)g +  \left( \sum_{j=1}^2 \sum_{k=1}^3 f_k \partial_j e_j e_k \right)g\\
&\quad = f \partial_0 g +f_0 \partial g - \left( \sum_{k=1}^3 f_k  e_k \sum_{j=1}^2 \partial_j  e_j \right)g - 2(f_1 \partial_1 +f_2 \partial_2)g\\
&\quad= -\bar f \partial_0 g + f_0 \partial g - f_I \partial g +2(f_0\partial_0-f_1 \partial_1 -f_2 \partial_2)g\\
&\quad = -\bar f (\bar D g) +2\, \Sc(f D)g.
\end{align*}
The proof follows by combining the two identities.
\end{proof}
\begin{remark}
If $f$ is a scalar function one recovers the classical Leibniz's formula
\[
D(fg) =D(gf)= (Df)g + f(Dg).
\]
\end{remark}\bigskip

We seek solutions to equation \eqref{bel} of the form
\begin{equation} \label{defF}
F(x,\zeta) = E(x,\zeta)M(x,\zeta),
\end{equation}
with
\begin{equation}\nonumber
M(x,\zeta) \to 1 \quad \text{as } |x| \to +\infty,
\end{equation}
for $\zeta \in \C^2$, $|\zeta|_{\C} \neq 0$, and $M$ a $\C_{(2)}$-valued function. Plugging \eqref{defF} into equation \eqref{bel}, using Leibniz's formula \eqref{eq:leib} and the fact that $D E= \bar D \bar E=0$, we find that $M = \sum_{j=0}^3 M_{j}e_j$ satisfies:
\begin{align}\notag
\bar D M(x,\zeta) = &-\frac{\mu}{2} \left( 1 + \frac{i\zeta}{|\zeta|_{\C}}\right) D \overline{ M (x,\zeta)}\left( 1 - \frac{i\zeta}{|\zeta|_{\C}}\right)\\ \notag
 &-2 \mu (-M_{0}|\zeta|_{\C}+i M_{1}\zeta_1 +iM_{2} \zeta_2 )\\ \label{eq:db}
&+ \left( 1 + \frac{i\zeta}{|\zeta|_{\C}}\right)\left(\frac{\partial}{\partial x_0} - i \frac{\zeta_1}{|\zeta|_{\C}}\frac{\partial}{\partial x_1} -i\frac{\zeta_2}{|\zeta|_{\C}}\frac{\partial}{\partial x_2} \right) M,
\end{align}
where we have denoted $\zeta = \zeta_1 e_1 + \zeta_2 e_2$.
We also used the fact that
\begin{align*}
&\overline{E(x,\zeta)} = e^{i(x_1\zeta_1 + x_2 \zeta_2) -x_0|\zeta|_{\C}}\left( 1- \frac{i \zeta}{|\zeta|_{\C}}\right),\\
&\qquad \left( 1- \frac{i \zeta}{|\zeta|_{\C}}\right)^{-1}= \frac 1 2 \left( 1+ \frac{i \zeta}{|\zeta|_{\C}}\right),
\end{align*}
since the Clifford conjugation does not change the complex coefficients.

Introducing the Clifford element $Z = 1 +\frac{i\zeta}{|\zeta|_{\C}} = 1 +\frac{i ( \zeta_1 e_1 + \zeta_2 e_2)}{|\zeta|_{\C}}$, we can rewrite equation \eqref{eq:db} as
\begin{align}
\bar D M(x,\zeta) = &- \mu Z\, D \bar M (x,\zeta)Z^{-1} + 2\mu |\zeta|_{\C} \Sc( Z M)+ Z \,\Sc(Z D) M.
\end{align}

\providecommand{\href}[2]{#2}
\providecommand{\arxiv}[1]{\href{http://arxiv.org/abs/#1}{arXiv:#1}}
\providecommand{\url}[1]{\texttt{#1}}
\providecommand{\urlprefix}{URL }

\end{document}